\newtheoremstyle{theoremstyle}
  {10pt}      
  {5pt}       
  {\itshape}  
  {}          
  {\bfseries} 
  {:}         
  {.5em}      
  {}          
\newtheoremstyle{examplestyle}
  {10pt}      
  {5pt}       
  {}          
  {}          
  {\bfseries} 
  {:}         
  {.5em}      
  {}          
\theoremstyle{theoremstyle}
\newtheorem{theorem}{Theorem}[section]
\newtheorem*{theorem*}{Theorem}
\newtheorem{lemma}[theorem]{Lemma}
\newtheorem{proposition}[theorem]{Proposition}
\newtheorem*{proposition*}{Proposition}
\newtheorem{corollary}[theorem]{Corollary}
\newtheorem*{corollary*}{Corollary}
\newtheorem{example}[theorem]{Example}
\newtheorem{definition}[theorem]{Definition}
\newtheorem{definition*}{Definition}
\newtheorem{remark}[theorem]{Remark}
\newtheorem{remark*}{Remark}
\newcommand{\G}{\mathbf{G}_{\mathfrak{m}}}
\newcommand{\A}{\mathbf{A}}
\newcommand{\B}{\mathsf{B}}
\newcommand{\PP}{\mathbf{P}}
\newcommand{\CC}{\mathbf{C}}
\newcommand{\BG}{\mathsf{B}\mathbf{G}_{\mathfrak{m}}}
\newcommand{\KGL}{\mathsf{KGL}}
\newcommand{\id}{\mathsf{id}}
\newcommand{\EE}{\mathsf{E}}
\newcommand{\Z}{\mathbf{Z}}
\newcommand{\sh}{\mathsf{sh}}
\newcommand{\colim}{\mathrm{colim}}
\renewcommand{\smash}{\wedge}
\newcommand{\sign}{\mathrm{sign}}
\title{{\bf Motivic strict ring models for $K$-theory}}
\author{Oliver R{\"o}ndigs, Markus Spitzweck, Paul Arne {\O}stv{\ae}r}
\date{July 23, 2009}
\begin{document}
\maketitle
\begin{abstract}
It is shown that the $K$-theory of every noetherian base scheme of finite Krull dimension is represented by a strict ring object in 
the setting of motivic stable homotopy theory.
The adjective `strict' is used to distinguish between the type of ring structure we construct and one which is valid only up to homotopy. 
Both the categories of motivic functors and motivic symmetric spectra furnish convenient frameworks for constructing the ring models.
Analogous topological results follow by running the same type of arguments as in the motivic setting.
\end{abstract}
\tableofcontents
\newpage

\section{Introduction}
\label{section:introduction}
Motivic homotopy theory can be viewed as an expansion of classical homotopy theory to an algebro-geometric setting. 
This has enabled the introduction of homotopy theoretic techniques in the study of generalized ring (co)homology theories for schemes,
and as in classical algebra one studies these via modules and algebras.
From this perspective, 
motives are simply modules over the motivic Eilenberg-MacLane ring spectrum \cite{RO1}, \cite{RO2}.
The main purpose of this paper is to show that the $K$-theory of every noetherian base scheme of finite Krull dimension acquires strict 
ring object models in motivic homotopy theory, 
and thereby pave the way towards a classification of modules over $K$-theory.
An example of a base scheme of particular interest is the integers.
Working with some flabby smash product which only become associative, commutative and unital after passage to the motivic stable 
homotopy category is inadequate for our purposes.
The whole paper is therefore couched in terms of motivic functors \cite{DROstable} and motivic symmetric spectra \cite{Jardinestable}.
Throughout the paper the term `$K$-theory' is short for homotopy algebraic $K$-theory.
\vspace{0.2in}

Fix a noetherian base scheme $S$ of finite Krull dimension with multiplicative group scheme $\G$. 
Denote by $\KGL$ the ordinary motivic spectrum representing $K$-theory \cite{Voevodskystable}.
As shown in \cite{SO}, 
see also \cite{GS} and \cite{NSOfield},
inverting a homotopy class $\beta\in \pi_{2,1}\Sigma^{\infty}{\BG}_{+}$ in the motivic suspension spectrum of the classifying space 
of the multiplicative group scheme yields a natural isomorphism in the motivic stable homotopy category
\begin{equation*}
\xymatrix{
\Sigma^{\infty}{\BG}_{+}[\beta^{-1}]
\ar[r]^-{\cong} &
\KGL. }
\end{equation*}
We shall turn the Bott inverted model for $K$-theory into a commutative monoid $\KGL^{\beta}$ in the category of motivic 
symmetric spectra.
To start with, 
the multiplicative structure of $\G$ induces a commutative monoid structure on the motivic symmetric suspension spectrum 
of the classifying space ${\BG}_{+}$.
A far more involved analysis dealing with an actual map rather than some homotopy class allows us to define $\KGL^{\beta}$ 
and eventually verify that it is a commutative monoid with the same homotopy type as $K$-theory.
Several of the main techniques employed in the proof are of interest in their own right, 
and can be traced back to constructions for symmetric spectra, cf.~\cite{Schwede:book}.
It also turns out that there exists a strict ring model for $K$-theory in the category of motivic functors.
Here the motivic functor model is constructed in a leisurely way by transporting $\KGL^{\beta}$ via the strict symmetric monoidal 
functor relating motivic symmetric spectra to motivic functors. 
\vspace{0.2in}
 
When suitably adopted the motivic argument works also in topological categories.   
The topological strict ring models appear to be new, even in the case of symmetric spectra.
\vspace{0.2in}

The Bott element considered by Voevodsky in \cite{Voevodskystable} is obtained from the virtual vector bundle
\begin{equation*}
[\mathcal{O}_{\PP^{1}}]
-
[\mathcal{O}_{\PP^{1}}(-1)].       
\end{equation*}
A key step in the construction of $\KGL^{\beta}$ is to interpret the same element,
viewed in the pointed motivic unstable homotopy category,  
as an actual map between motivic spaces.
In order to make this part precise we shall use a lax symmetric monoidal fibrant replacement functor for pointed motivic spaces.
Fibrancy is a constant source for extra fun in abstract homotopy theory.  
The problem resolved in this paper is no exception in that respect.
It is also worthwhile to emphasize the intriguing fact that $\beta$ does not play a role in the definition of the multiplicative
structure of $\KGL^{\beta}$.
However, 
the Bott element enters in the definition of the unit map ${\bf 1}\to\KGL^{\beta}$, 
which is part of the monoid structure, 
and in the structure maps.
In fact, 
up to some fibrant replacement, 
$\KGL^{\beta}$ is constructed fairly directly from $\Sigma^{\infty}{\BG}_{+}$ by intertwining a map representing $\beta$ 
with the structure maps.
On the level of homotopy groups this type of intertwining has the effect of inverting the Bott element.
As a result, 
we obtain the desired homotopy type.
\vspace{0.2in}

In \cite{PPR1} it is shown that under a certain normalization assumption the ring structure on $\KGL$ in the motivic stable homotopy 
category is unique over the ring of integers $\Z$. 
For any base scheme $S$ the multiplicative structure pulls back to give a distinguished monoidal structure on $\KGL$.
We show the multiplicative structures on $\KGL^{\beta}$ and $\KGL$ coincide in the motivic stable homotopy category.
The proof of this result is not formal.
A key input is that for $K$-theory there exists no nontrivial phantom maps.
In turn, 
this is a consequence of Landweber exactness in motivic homotopy theory \cite{NSO}.
\vspace{0.2in}

In \cite{GS} the setup of $\infty$-categories is used to note the existence of an $E_{\infty}$ or coherently homotopy 
commutative structure on $K$-theory.
Work in progress suggests there exists a unique such structure. 
However, 
with the construction of $\KGL^{\beta}$ in hand one has strict models for $K$-theory.
And the strictness of a model has the pleasing consequence that it is amenable to a simpler homotopical study.
This has been the subject to much work dealing with topological $K$-theories.

\section{A strict model}
\label{section:thestrictmodels}
The main focus of this section is the construction of a strict model for $K$-theory in the category of motivic symmetric spectra.
Throughout we use the `closed' motivic model structure in \cite{PPR1} with a view towards realization functors.
An extensive background in motivic stable homotopy theory is not assumed.
\vspace{0.1in}

The classifying space $\BG$ has terms $\G^{\times n}$ for $n\geq 0$ with the convention that its zeroth term is a point.
Its face and degeneracy maps, 
which are defined in a standard way using diagonals and products, 
allow to consider $\BG$ as a motivic space
(that is, a simplicial presheaf on the Nisnevich site of the base scheme $S$).
\vspace{0.1in}

Throughout we use the following standard notation:
Let $S^{2,1}$ denote the motivic sphere defined as the smash product of the simplicial circle $S^{1,0}=\Delta^1/\partial\Delta^1$ with $\G$ pointed
by its one-section.
For $n\geq 2$ we set $S^{2n,n} = S^{2n-2,n-1}\smash S^{2,1}$. 
When forming motivic spectra, we shall for consistency with \cite{Schwede:book} be smashing with $S^{2,1}$ on the right. 
\vspace{0.1in}

In the introduction it was recalled that the Bott element is a homotopy class 
\begin{equation*}
\beta\in \pi_{2,1}\Sigma^{\infty}{\BG}_{+}. 
\end{equation*}
As such, 
it is represented by a map of pointed motivic spaces 
\begin{equation*}
S^{2n+2,n+1}\to ({\BG}_+ \smash S^{2n,n} )^{\textrm{fib}}
\end{equation*}
for some $n$, 
where $(-)^{\textrm{fib}}$ denotes a fibrant replacement functor. 
The construction we give of $\KGL^{\beta}$ works for any such representative provided the fibrant replacement 
functor is lax symmetric monoidal.  
By Lemma~\ref{lem:fibrant-repl} we may choose a fibrant replacement functor with the stated properties.
As shown in the  following,
the situation at hand allows for an explicit construction of a map 
\begin{equation*}
S^{4,2}\to ({\BG}_+ \smash S^{2,1} )^{\textrm{fib}}
\end{equation*}
that represents the Bott element.
\vspace{0.1in}

Before proceeding with the construction of the strict models we discuss fibrancy of the multiplicative group scheme 
and its classifying space, 
pertaining to the discussion of a fibrant replacement functor in the above.

\begin{example}
The classifying space of the multiplicative group scheme is sectionwise fibrant because it takes values in 
simplicial abelian groups.
When $S$ is regular then $\G$ is fibrant.
However, 
as the following discussion shows, 
$\BG$ is not fibrant.

The standard open covering of the projective line by affine lines yields an elementary distinguished square:
\begin{equation}
\label{equation:eds}
\xymatrix{ 
\G      \ar[r] \ar[d] & \A^{1} \ar[d] \\
\A^{1}  \ar[r]        & \PP^{1} }
\end{equation}
Let $P$ denote the homotopy pullback of the diagram
\begin{equation*}
\xymatrix{ 
\BG(\A^{1}) \ar[r] & \BG(\G) & \BG(\A^{1}) \ar[l] }
\end{equation*}
obtained by applying $\BG$ to $(\ref{equation:eds})$.
Then the homotopy fiber of the map $P\to\BG(\A^{1})$ is weakly equivalent to the homotopy fiber $F$ of $\BG(\A^{1})\to\BG(\G)$.
For a ring $R$, 
let $R^{\times}$ denote its multiplicative group of units.
With these definitions there exist induced exact sequences of homotopy groups
\begin{equation*}
\xymatrix{ 
0\ar[r] & 
\pi_{1}P\ar[r] & 
\mathcal{O}_{S}^{\times}=\mathcal{O}_{S}[t^{-1}]^{\times}\ar[r] &
\pi_{0}F\ar[r] & 
\pi_{0}P\ar[r] & 
0, }
\end{equation*}
and  
\begin{equation*}
\xymatrix{ 
0\ar[r] & 
\mathcal{O}_{S}^{\times}=\mathcal{O}_{S}[t]^{\times}\ar[r] &
\mathcal{O}_{S}[t,t^{-1}]^{\times}\ar[r] &
\pi_{0}F\ar[r] & 
0. }
\end{equation*}
(Here $t$ is an indeterminate.)
Hence $\mathcal{O}_{S}[t^{-1}]^{\times}\to\pi_{0}F$ cannot be surjective.
It follows that $P$ is not connected and, 
in particular, 
not weakly equivalent to $\BG(\PP^{1})=\B\mathcal{O}_{S}^{\times}$.
This shows that $\BG$ does not satisfy the Nisnevich fibrancy condition, see 
\cite{DROstable}, \cite{Jardinestable}, \cite{MV}.
\end{example}
\vspace{0.1in}

We write
\[ 
i\colon S^{1,0}\smash \G\to \BG 
\]
for the inclusion of the $1$-skeleton $\G$ into the classifying space $\BG$.
Let 
\[ 
c\colon S^{1,0}\smash \G\to \BG 
\]
denote the constant map. 
Via the motivic weak equivalences 
$S^{1,0}\smash \G\simeq\PP^{1}$ 
and  
$\BG\simeq\PP^{\infty}$
the map $i$ can be identified in the pointed motivic unstable homotopy category with the inclusion 
\[
\PP^{1}\rightarrow\PP^{\infty}.
\]
In homogeneous coordinates the inclusion map is given by 
\[
[x:y]\mapsto [x:y:0:\cdots].
\]
Similarly, 
the map $c$ coincides with the canonical composite map 
\[
\PP^{1}\rightarrow S\rightarrow\PP^{\infty}
\]
given by 
\[
[x:y]\mapsto [1:0:\cdots].
\]
\vspace{0.1in}

Adding a disjoint base point to the classifying space of $\G$ yields pointed maps 
\[ 
i_+,c_+ \colon S^{2,1} \to {\BG}_+ 
\] 
for the base point of ${\BG}$.
Now in order to move the base point in ${\BG}_+$ we take the unreduced suspension of both these maps. 
Recall the unreduced suspension of a motivic space $A$ is defined as the pushout
\[ 
S(A)
=  
A\times \Delta^1\cup_{A\times \partial \Delta^1} \partial \Delta^1. 
\]
One can view it as a pointed motivic space by the image of $0\in \partial \Delta^1$.
With this definition, 
the unreduced suspensions of the maps $i_+$ and $c_+$ are pointed with respect to the image of $(+,0)$ in their target.
If $A$ is pointed, the canonical map $q\colon S(A)\to \Sigma A$ to the reduced suspension is a weak equivalence. 
Hence there exists a map of pointed motivic spaces 
\[ 
S^{4,2} 
\to 
(S(S^{2,1})\smash \G)^{\mathrm{fib}} 
\]
lifting the inverse of the map $q\smash \G$ in the pointed motivic unstable homotopy category.
By composing we end up with the two pointed maps
\[ 
i^\beta, 
c^\beta
\colon 
S^{4,2} 
\xrightarrow{\sim}
(S(S^{2,1})\smash \G)^{\mathrm{fib}} 
\rightrightarrows
(S({\BG}_+) \smash \G )^{\mathrm{fib}}
\xrightarrow{\sim}
({\BG}_+ \smash S^{2,1} )^{\mathrm{fib}}.
\]

As a first approximation of the Bott element $\beta$ we consider the analog of the virtual vector bundle
$[\mathcal{O}_{\PP^{1}}]-[\mathcal{O}_{\PP^{1}}(-1)]$ in the pointed motivic unstable homotopy category
\begin{equation}
\label{formaldifference}
c^\beta - i^\beta
\colon 
S^{4,2} 
\to 
({\BG}_+ \smash S^{2,1})^{\mathrm{fib}} .
\end{equation}
In order to form the difference map we use that $S^{4,2}$ is a (two-fold) simplicial suspension, 
and therefore a cogroup object in the pointed motivic unstable homotopy category. 
Note, 
however, 
that $c^\beta$ represents the trivial map because it factors through the base point. 
By appealing to the motivic model structure it follows that (\ref{formaldifference}) lifts to a `strict' motivic Bott map 
\[ 
S^{4,2} 
\to 
({\BG}_+ \smash S^{2,1})^{\mathrm{fib}}.
\]
between pointed motivic spaces.
Here we use that the motivic sphere is cofibrant in the closed motivic model structure.
\vspace{0.1in}

The fibrancy caveat above requires us to replace the suspension object
$\Sigma^\infty {\BG}_+$ with a levelwise fibrant motivic spectrum. 
An arbitrary such replacement need not preserve commutative monoids.
The following lemma will therefore be of relevance later in the paper. 
\begin{lemma}
\label{lem:fibrant-repl}
There exists a lax symmetric monoidal fibrant replacement functor $\mathrm{Id}\to F$ on the category of pointed motivic spaces.
\end{lemma}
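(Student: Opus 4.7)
The plan is to realize $F$ as a transfinite iteration of three lax symmetric monoidal endofunctors on pointed motivic spaces, each responsible for one ingredient of fibrancy in the closed motivic model structure on simplicial presheaves on $\Sm/S$ — sectionwise Kan fibrancy, $\A^{1}$-locality, and Nisnevich descent — and then invoke the stability of lax symmetric monoidality under composition and filtered colimits.

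For the Kan step, apply $\mathrm{Ex}^{\infty}$ sectionwise. As a right adjoint $\mathrm{Ex}$ preserves finite limits, and finite limits commute with filtered colimits in simplicial sets, so $\mathrm{Ex}^{\infty}$ preserves finite products. On pointed objects this yields a canonical comparison
\[
\mathrm{Ex}^{\infty}(X)\smash \mathrm{Ex}^{\infty}(Y)=\mathrm{Ex}^{\infty}(X\times Y)\big/\bigl(\mathrm{Ex}^{\infty}(X)\vee \mathrm{Ex}^{\infty}(Y)\bigr)\longrightarrow \mathrm{Ex}^{\infty}(X\smash Y),
\]
well defined because each basepoint inclusion $X,Y\hookrightarrow X\times Y$ becomes constant after passage to $X\smash Y$, and coherent (associative, symmetric, unital) by functoriality. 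The $\A^{1}$-singular functor $\mathrm{Sing}^{\A^{1}}$, obtained by evaluation on the products $U\times \A^{\bullet}$, likewise preserves finite limits and so inherits the same lax symmetric monoidal structure via the analogous quotient. A Godement-style construction through the henselian points of the Nisnevich site supplies the third step: each pullback/pushforward adjoint pair preserves finite products, and the iterated cosimplicial resolution assembles into a lax symmetric monoidal endofunctor converging to a Nisnevich-fibrant object.

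Composing these three and iterating transfinitely past the accessibility rank of the closed motivic localization produces an endofunctor with motivic fibrant values; lax symmetric monoidality is preserved by both composition and filtered colimits of pointed motivic spaces, so the resulting $F$ inherits the desired structure, compatibly with the unit $\mathrm{Id}\to F$. The main obstacle I expect is arranging the Nisnevich stage to be simultaneously functorial, Nisnevich-fibrant in the limit, and lax symmetric monoidal — coherence over all henselian points requires nontrivial bookkeeping, and one must also check that the convergence of the combined tower is compatible with the comparison maps at each stage. If the Godement route becomes unwieldy, an alternative is to run a symmetric small-object argument against a symmetry-stable set of generating trivial cofibrations for the Nisnevich-local injective structure, adapting the organization developed for symmetric spectra in Schwede's book, which produces the lax symmetric monoidal structure on the resulting fibrant replacement directly.
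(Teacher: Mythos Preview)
Your approach is essentially the same as the paper's: build the fibrant replacement by iterating lax symmetric monoidal endofunctors that separately enforce local fibrancy and $\A^{1}$-locality. The differences are mostly packaging. The paper does not split the local step into ``sectionwise Kan'' plus ``Godement for Nisnevich''; instead it cites the simplicial-presheaf analog of \cite[Theorem~2.1.66]{MV}, which already furnishes a single lax symmetric monoidal fibrant replacement $\mathrm{Ex}^{\infty}$ for the \emph{local} model structure on any site of finite type. This absorbs precisely the part you flagged as the main obstacle, so your worry about the coherence of the Godement stage is legitimate but is exactly what the Morel--Voevodsky citation handles. For the $\A^{1}$-step the paper observes that $\mathrm{Sing}_{\ast}$ is in fact \emph{strict} symmetric monoidal (it commutes with all limits and colimits), which is a little stronger than the lax statement you derive from finite-limit preservation. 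Finally, the paper uses only an $\omega$-fold iteration $\mathrm{Ex}^{\infty}\circ(\mathrm{Ex}^{\infty}\circ\mathrm{Sing}_{\ast})^{\omega}\circ\mathrm{Ex}^{\infty}$ and cites \cite[Lemma~3.2.6]{MV} for convergence, rather than a transfinite tower past an accessibility rank. Your strategy would work, but the paper's route is shorter because it offloads both the Nisnevich step and the convergence argument to \cite{MV}.
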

\begin{proof}
The straightforward simplicial presheaf analog of \cite[Theorem 2.1.66]{MV} provides a lax symmetric monoidal fibrant 
replacement functor $\textrm{Ex}^{\infty}$ for the local model structure on any site of finite type. 
Moreover, 
the singular endofunctor $\mathrm{Sing}_{\ast}$ \cite{MV} constructed by means of the standard cosimplicial $S$-scheme 
$\Delta^{\bullet}_{\A^{1}}$ with terms
\begin{equation*}
\Delta^{n}_{\A^{1}}=
S\times_{\textrm{Spec}(\Z)}\textrm{Spec}(\Z[x_0,\dots,x_n]/x_0+\dots+x_{i}-1)
\end{equation*}
is strict symmetric monoidal, 
because it commutes with limits and colimits. 
Thus the lemma follows by using the iterated construction 
\begin{equation*}
\textrm{Ex}^{\infty}\circ (\textrm{Ex}^{\infty}\circ \mathrm{Sing}_{\ast})^{\omega} \circ \textrm{Ex}^{\infty}
\end{equation*}
as the fibrant replacement functor \cite[Lemma 3.2.6]{MV}.
In this definition $\omega$ denotes the cardinality of the natural numbers.
\end{proof}

\begin{corollary}
\label{cor:inf-proj-commutative}
The motivic symmetric spectrum $F(\Sigma^\infty {\BG}_+)$ obtained by applying the functor $F$ levelwise to  $\Sigma^\infty {\BG}_+$
is a commutative monoid.
\end{corollary}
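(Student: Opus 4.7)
The plan is to transport the commutative monoid structure along a chain of symmetric monoidal (lax or strong) functors. The starting point is the observation that $\G$ is an abelian group scheme: its multiplication $\G \times \G \to \G$ is commutative, associative, and unital. Applying the bar construction, the classifying space $\BG$ inherits a commutative monoid structure in pointed motivic spaces with respect to the categorical product; adding a disjoint basepoint converts this into a commutative monoid $({\BG})_+$ under the smash product, since $(-)_+ \colon \mathrm{Spc}(S) \to \mathrm{Spc}_\ast(S)$ is strong symmetric monoidal from cartesian product to smash product.

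Next, I would invoke the fact that the symmetric suspension spectrum functor $\Sigma^\infty \colon \mathrm{Spc}_\ast(S) \to \mathrm{Spt}^\Sigma(S)$ is strong symmetric monoidal in the motivic category (with respect to smash products on both sides, as constructed for instance in \cite{Jardinestable}). Therefore $\Sigma^\infty {\BG}_+$ is a commutative monoid in motivic symmetric spectra, with unit map $\unit \to \Sigma^\infty {\BG}_+$ induced by the basepoint and multiplication $\Sigma^\infty {\BG}_+ \smash \Sigma^\infty {\BG}_+ \cong \Sigma^\infty({\BG}_+ \smash {\BG}_+) \to \Sigma^\infty {\BG}_+$ induced by the monoid structure.

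Finally, the functor $F$ of Lemma~\ref{lem:fibrant-repl} is lax symmetric monoidal on pointed motivic spaces. Applied levelwise to a symmetric spectrum $E = (E_n)_{n \geq 0}$ with structure maps $\sigma_n \colon E_n \smash S^{2,1} \to E_{n+1}$, it produces a symmetric spectrum $F(E)$ with structure maps $F(E_n) \smash S^{2,1} \to F(E_n) \smash F(S^{2,1}) \to F(E_n \smash S^{2,1}) \xrightarrow{F(\sigma_n)} F(E_{n+1})$, where the middle arrow is the lax monoidal structure and the first arrow uses the unit $S^{2,1} \to F(S^{2,1})$. A straightforward check, which is standard for lax symmetric monoidal functors between symmetric monoidal model categories, shows that this levelwise construction $F$ is itself lax symmetric monoidal on motivic symmetric spectra. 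Any lax symmetric monoidal functor carries commutative monoids to commutative monoids, so the multiplication $\mu$ on $\Sigma^\infty {\BG}_+$ induces a multiplication $F(\Sigma^\infty {\BG}_+) \smash F(\Sigma^\infty {\BG}_+) \to F(\Sigma^\infty {\BG}_+ \smash \Sigma^\infty {\BG}_+) \xrightarrow{F(\mu)} F(\Sigma^\infty {\BG}_+)$, and the unit is obtained analogously.

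The only nontrivial point is the compatibility of $F$ applied levelwise with the smash product of symmetric spectra; this requires verifying that the lax structure maps for $F$ on spaces assemble coherently across the symmetric sequence, which is routine but needs to be spelled out, and that the commutative monoid axioms transfer along a lax symmetric monoidal functor. Both are formal consequences of naturality and the hexagon/associativity coherence; no homotopical input beyond the lemma is required.
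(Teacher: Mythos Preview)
Your proof is correct and follows exactly the same approach as the paper: the paper's proof simply cites Lemma~\ref{lem:fibrant-repl} together with the commutativity of $\G$, leaving implicit the chain of symmetric monoidal functors (bar construction, $(-)_+$, $\Sigma^\infty$, levelwise $F$) that you have spelled out. Your elaboration of why levelwise application of a lax symmetric monoidal functor on spaces yields one on symmetric spectra is the only nontrivial verification, and it is indeed routine.
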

\begin{proof}
The assertion follows immediately by combining Lemma \ref{lem:fibrant-repl} and the fact that $\G$ is a commutative group scheme.
\end{proof}

\begin{corollary}
\label{cor:bott-central}
There exists a motivic Bott map between pointed motivic spaces
\[ 
b\colon S^{4,2}\to F({\BG}_+ \smash S^{2,1} ) 
\]
that represents the difference map 
\[
c^\beta- i^\beta
\]
in the pointed motivic unstable homotopy category. 
The map $b$ is central in the sense that the diagram
\[
\xymatrix{ 
F({\BG}_+)^{n}  \smash S^{4,2} \ar[r]^-{\id \smash b} \ar[d]_{\mathrm{twist}}^{\cong} & 
F({\BG}_+)^{n}  \smash F({\BG}_+)^{1} \ar[r]^-{\mu_{n,1}} & 
F({\BG}_+)^{n+1} \ar[d]^{\chi_{n,1}} \\
S^{4,2}\smash F({\BG}_+)^{n}  \ar[r]^-{b\smash \id} & 
F({\BG}_+)^{1} \smash F({\BG}_+)^{n}  \ar[r]^-{\mu_{1,n}} &
F({\BG}_+)^{1+n} } 
\]
commutes.
Here $F({\BG}_+)^{k}$ is short for $F({\BG}_+ \smash S^{2k,k})$  and $\chi_{n,1}$ denotes the cyclic permutation $(1,2,\dots,n,n+1)$.
\end{corollary}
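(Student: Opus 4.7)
The approach has two parts: first to dispatch the existence of $b$, which is essentially contained in the discussion immediately preceding the statement, and then to derive centrality as a formal consequence of the commutative monoid structure provided by Corollary~\ref{cor:inf-proj-commutative}.

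For existence, I would simply observe that the difference $c^\beta - i^\beta$ of (\ref{formaldifference}) defines a morphism in the pointed motivic unstable homotopy category between the cofibrant object $S^{4,2}$ and the fibrant object $F({\BG}_+\smash S^{2,1})$, and hence lifts to a genuine map $b$ of pointed motivic spaces. Fibrancy of the target is exactly Lemma~\ref{lem:fibrant-repl}.

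For centrality, abbreviate $E := F(\Sigma^\infty{\BG}_+)$, so that $E_k = F({\BG}_+)^k$ and $b\colon S^{4,2}\to E_1$. The plan is to decompose the rectangle of the statement into two squares sharing the middle column $E_n\smash E_1 \to E_1\smash E_n$. The right-hand square is precisely the commutativity of $E$ as a commutative monoid in motivic symmetric spectra (Corollary~\ref{cor:inf-proj-commutative}) specialized to bidegree $(n,1)$, namely the identity
\[
\chi_{n,1}\circ \mu_{n,1} \;=\; \mu_{1,n}\circ \mathrm{twist}_{E_n,E_1}
\]
of maps $E_n\smash E_1\to E_{1+n}$. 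The left-hand square records naturality of the symmetry isomorphism applied to $b$: one has $\mathrm{twist}_{E_n,E_1}\circ(\id_{E_n}\smash b)=(b\smash \id_{E_n})\circ \mathrm{twist}_{E_n, S^{4,2}}$. Horizontal pasting of these two squares reproduces the full rectangle of the corollary.

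I expect no real obstacle beyond securing the existence of $b$: the remaining argument is purely formal, relying only on commutativity of $E$ and naturality of $\mathrm{twist}$. The sole technical point is a convention check that the cyclic permutation $(1,2,\ldots,n+1)$ written in the statement coincides with the $(n,1)$-shuffle that enters the definition of commutativity for a monoid in motivic symmetric spectra.
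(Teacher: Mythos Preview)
Your proposal is correct and follows the same approach as the paper, which simply states that the result is immediate from Corollary~\ref{cor:inf-proj-commutative} and the commutativity of $\G$. You have merely unpacked what ``immediate'' means: the existence of $b$ is already established in the discussion preceding the statement, and centrality is exactly the commutativity axiom for the monoid $F(\Sigma^\infty{\BG}_+)$ specialized to bidegree $(n,1)$, combined with naturality of the symmetry isomorphism.
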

\begin{proof}
This is immediate from Corollary \ref{cor:inf-proj-commutative} and the commutativity of $\G$.
\end{proof}

With these preliminary results in hand we are ready to construct a strict ring model for $K$-theory.
In the following we shall adopt constructions for symmetric spectra given in Schwede's manuscript \cite{Schwede:book} 
to the setting of motivic symmetric spectra.
Let $\Omega^{2n,n}$ denote the right adjoint of the suspension functor $-\smash S^{2n,n}$ on pointed motivic spaces.
\vspace{0.2in}

Define $\KGL^{\beta}$ to be the motivic symmetric spectrum with constituent spaces
\[ 
\KGL^{\beta}_n= 
\Omega^{4n,2n}F({\BG}_+ \smash S^{4n,2n} ). 
\]
The group $\Sigma_{n}$ acts on $S^{4n,2n}$ and therefore also on $F({\BG}_+ \smash S^{4n,2n} )$ 
via restriction along the diagonal embedding
\begin{equation*}
\Delta_{n}
\colon
\Sigma_{n}
\to
\Sigma_{2n}
\end{equation*}
defined for $1\leq j\leq 2$ and $1\leq i\leq n$ by setting
\begin{equation*}
\Delta_{n}(\sigma)(j+2(i-1))
=
j+2(\sigma(i)-1).
\end{equation*}
Now the $\Sigma_{n}$-action on the $(4n,2n)$-loop space $\KGL^{\beta}_n$ is defined by conjugation.
That is, 
for elements $\sigma\in\Sigma_{n}$ and $\phi\in\KGL^{\beta}_n$ define 
\[ 
\sigma\cdot\phi(-)=\sigma(\phi(\sigma^{-1}(-))).
\]
In this definition, taking sections is implicit in the notation. 
\vspace{0.2in}

Let 
\[ 
\mu_{m,n}\colon
F({\BG}_+ \smash S^{2m,m} )
\smash
F({\BG}_+ \smash S^{2n,n} ) 
\to
F({\BG}_+ \smash S^{2(m+n),m+n} )
\]
denote the maps comprising the multiplicative part of the monoid structure on 
\[ 
F(\Sigma^\infty {\BG}_+).
\]

Define the multiplication map 
\begin{equation}
\label{multiplication}
\KGL^{\beta}_m\smash \KGL^{\beta}_n \to \KGL^{\beta}_{m+n}
\end{equation}
by
\[ 
f\smash g \mapsto \mu_{2m,2n}\circ (f\smash g).
\]

The multiplication map (\ref{multiplication}) is strictly associative on account of the strict associativity of the smash product 
and the multiplicative structure on $F(\Sigma^\infty {\BG}_+)$.
\vspace{0.1in}

Moreover, 
(\ref{multiplication}) is $\Sigma_{m}\times\Sigma_{n}$-equivariant due to the equivariance of the multiplicative structure on 
$F(\Sigma^\infty {\BG}_+)$ and the compatibility relation
\[
\Delta_{m}(\sigma)\times\Delta_{n}(\sigma')=\Delta_{m+n}(\sigma\times\sigma')
\]
for the diagonal embeddings $\Delta_{k}\colon\Sigma_{k}\to\Sigma_{2k}$
(in our cases of interest $k=m,n,m+n$).
\vspace{0.1in}

Let
\[ 
F( {\BG}_+ \smash S^{2n,n} )
\to 
\KGL^{\beta}_n= 
\Omega^{4n,2n} F( {\BG}_+ \smash S^{4n,2n} ) 
\]
be the adjoint of the composite map of 
\[ 
F( {\BG}_+ \smash S^{2n,n} )\smash S^{4n,2n} \! 
\xrightarrow{\id\smash b^{\smash n}}  
\!\!F({\BG}_+ \smash S^{2n,n} ) \smash  F({\BG}_+ \smash S^{2n,n} ) 
\xrightarrow{\mu_{n,n}}  
\!\!F( {\BG}_+ \smash S^{4n,2n} )
\]
and 
\[ 
F( {\BG}_+ \smash S^{4n,2n} )\to
F( {\BG}_+ \smash S^{4n,2n} )
\]
given by the permutation $\sigma\in\Sigma_{2n}$ defined by 
\begin{equation*}
\sigma(i)=
\begin{cases}
1+2(i-1) & 1\leq i\leq n \\
2+2(k-1) & i=n+k, 1\leq k\leq n. 
\end{cases}
\end{equation*}

These maps assemble into a map of motivic symmetric ring spectra
\[ 
F(\Sigma^{\infty}{\BG}_+)
\to 
\KGL^{\beta}.
\]
In fact the structure maps 
\begin{equation*}
\KGL^{\beta}_{n}\smash S^{2,1} \rightarrow\KGL^{\beta}_{n+1}
\end{equation*}
and the unit map of $\KGL^{\beta}$ are obtained from the above and the unit map of $F(\Sigma^\infty {\BG}_+)$.

\begin{lemma}
\label{lem:inv-ring-spectrum}
The motivic symmetric spectrum $\KGL^{\beta}$ is a commutative monoid.
\end{lemma}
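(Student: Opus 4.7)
All of the ring-spectrum axioms for $\KGL^{\beta}$ --- strict associativity, $\Sigma_m\times\Sigma_n$-equivariance of the multiplication, and the existence of a unit --- have already been observed in the preceding discussion, so the content of the lemma is the commutativity constraint on the multiplication maps (\ref{multiplication}), namely
\[
\mu_{n,m}^{\KGL^{\beta}}\circ\tau = (\chi_{m,n})_{\star}\circ\mu_{m,n}^{\KGL^{\beta}}\colon \KGL^{\beta}_m\smash\KGL^{\beta}_n\to\KGL^{\beta}_{m+n},
\]
where $\chi_{m,n}\in\Sigma_{m+n}$ is the $(m,n)$-shuffle acting on $\KGL^{\beta}_{m+n}$ through the $\Delta_{m+n}$-action described above.

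The plan is to reduce this to the commutativity of $F(\Sigma^{\infty}\BG_+)$ supplied by Corollary~\ref{cor:inf-proj-commutative}. Given $f\in\KGL^{\beta}_m$ and $g\in\KGL^{\beta}_n$, the definition of the multiplication together with naturality of the monoidal twist rewrite $(g\smash f)\circ\tau_{\mathrm{spheres}}$ as $\tau_{F,F}\circ(f\smash g)$, where $\tau_{F,F}$ is the twist on $F({\BG}_+\smash S^{4m,2m})\smash F({\BG}_+\smash S^{4n,2n})$. Corollary~\ref{cor:inf-proj-commutative} applied at levels $(2m,2n)$ then gives
\[
\mu_{2n,2m}\circ\tau_{F,F} = (\chi_{2m,2n})_{\star}\circ\mu_{2m,2n},
\]
with $\chi_{2m,2n}\in\Sigma_{2(m+n)}$ the $(2m,2n)$-shuffle.

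Comparing this with the required commutativity constraint on $\KGL^{\beta}$ collapses the whole argument to the combinatorial identity
\[
\Delta_{m+n}(\chi_{m,n})=\chi_{2m,2n}\quad\text{in }\Sigma_{2(m+n)}.
\]
I would verify this by direct computation: writing any $k\in\{1,\dots,2(m+n)\}$ uniquely as $k=j+2(i-1)$ with $j\in\{1,2\}$ and $1\leq i\leq m+n$, a short case analysis on whether $i\leq m$ or $i>m$ reduces both sides of the identity to the same explicit formula.

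The main (and essentially only) obstacle is this shuffle compatibility, but it is no accident: it is precisely the reason for defining $\Delta_n$ in terms of two-element blocks. Since $\KGL^{\beta}_n$ is built from a $2n$-fold smash of $S^{2,1}$, the $\Sigma_n$-action at spectrum level $n$ must factor through two-element blocks inside $\Sigma_{2n}$, and under this two-fold blocking an $(m,n)$-block shuffle is carried to a $(2m,2n)$-block shuffle. Everything else is formal.
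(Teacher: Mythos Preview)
Your proof is correct and follows the same route as the paper's: both reduce the commutativity of $\KGL^{\beta}$ to the commutativity of $\G$ (equivalently, of $F(\Sigma^\infty{\BG}_+)$). The paper's argument is a one-liner that simply asserts the commutativity equation $\mu_{2m,2n}\circ(f\smash g)=\mu_{2n,2m}\circ(g\smash f)$ holds because $\G$ is a commutative group scheme, whereas you spell out the bookkeeping---in particular the shuffle compatibility $\Delta_{m+n}(\chi_{m,n})=\chi_{2m,2n}$---that the paper leaves implicit.
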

\begin{proof}
The equation for commutativity  
\begin{equation*}
\mu_{2m,2n}\circ (f\smash g)=
\mu_{2n,2m}\circ (g\smash f)
\end{equation*}
holds because $\G$ is a commutative group scheme.
\end{proof}

\section{The homotopy type}
\label{section:proofs}

In this section we finish the proof of our main result and elaborate further on some closely related results.
First we prepare for the comparison of $\KGL^{\beta}$ with the homotopy colimit of the Bott tower introduced in \cite{SO}. 
\vspace{0.1in}

Let $\sh(-)$ denote the shifted motivic symmetric spectrum functor defined by 
\begin{equation*}
\sh(\EE)_{n}
=
\EE_{1+n}.
\end{equation*}
Its  structure maps are induced from the ones for $\EE$ by reindexing.
The $\Sigma_{n}$-action on the $n$th term of $\sh(\EE)$ is determined by the injection 
$(1\times -)\colon\Sigma_{n}\to\Sigma_{1+n}$ given by 
\begin{equation*}
(1\times\sigma)(i)
=
\begin{cases}
1 & i=1 \\
\sigma(i-1)+1 & i\neq 1.
\end{cases}
\end{equation*}
For our purposes the main application of the shift functor is to introduce the notion of a semistable motivic symmetric spectrum.
\vspace{0.1in}

There exists a natural map 
\begin{equation}
\label{naturalshiftmap}
\phi(\EE)
\colon
S^{2,1}\smash \EE \to \sh(\EE).
\end{equation}
In level $n$ it is defined as the composite map
\[ 
S^{2,1}\smash \EE_{n} 
\xrightarrow{\cong} 
\EE_{n} \smash S^{2,1}
\to
\EE_{n+1} 
\to
\EE_{1+n}
\]
of the twist isomorphism, the $n$th structure map of $\EE$ and the cyclic permutation
\begin{equation*}
\chi_{n,1}=(1,2,\dots,n,n+1).
\end{equation*}
Using only the structure maps of $\EE$ would not give a map of motivic symmetric spectra.
The map (\ref{naturalshiftmap}) is not a stable weak equivalence in general. 
\begin{definition}
A motivic symmetric spectrum $\EE$ is called {\em semistable\/} if (\ref{naturalshiftmap}) is a 
stable weak equivalence of underlying (non-symmetric) motivic spectra.
\end{definition}

\begin{proposition}
\label{prop:semistable}
Let $\EE$ be a motivic symmetric spectrum such that for every $n$ and every permutation $\sigma\in \Sigma_n$ with sign 
$\sign(\sigma)=1$ the action of $\sigma$ on $\EE_n$ coincides with the identity in the pointed motivic unstable homotopy category. 
Then $\EE$ is semistable.
\end{proposition}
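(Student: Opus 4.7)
The plan is to show that $\phi(\EE)$ induces isomorphisms on the naive stable homotopy groups of the underlying non-symmetric motivic spectra; for the closed motivic model structure this suffices to conclude that $\phi(\EE)$ is a stable weak equivalence.

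For any non-symmetric motivic spectrum $\FF$ the naive stable homotopy groups are the filtered colimits
\[
\pi_{p,q}^{s}(\FF)=\colim_{n}\pi_{p+2n,\,q+n}(\FF_{n}),
\]
with transitions built from the structure maps and the suspension isomorphism. Specialising to $\FF=S^{2,1}\smash\EE$ and $\FF=\sh(\EE)$, both groups canonically identify with $\pi_{p-2,\,q-1}^{s}(\EE)$---via the suspension isomorphism in the first case, and via the reindexing $m=1+n$ of the colimit in the second. What remains is to check that $\phi(\EE)_{*}$ realises this canonical identification.

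Next, I would unpack $\phi(\EE)$ level by level. By construction its $n$th component is the composition of the twist $S^{2,1}\smash\EE_{n}\to\EE_{n}\smash S^{2,1}$, the $n$th structure map $\EE_{n}\smash S^{2,1}\to\EE_{n+1}$, and the action of $\chi_{n,1}\in\Sigma_{n+1}$ on $\EE_{n+1}$. After the canonical identification $\pi_{p+2n,\,q+n}(S^{2,1}\smash\EE_{n})\cong\pi_{p+2n-2,\,q+n-1}(\EE_{n})$ coming from the suspension isomorphism, the induced map on homotopy groups is precisely the transition used in forming the colimit $\pi_{p-2,\,q-1}^{s}(\EE)$, post-composed with the self-action of $\chi_{n,1}$ on $\pi_{p+2n,\,q+n}(\EE_{n+1})$.

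The key observation, and the only place where the hypothesis enters, is that $\chi_{n,1}$ has sign $(-1)^{n}$ and is thus an even permutation precisely when $n$ is even. Restricting to the cofinal subsequence of even indices in the filtered colimit, the hypothesis forces $\chi_{n,1}$ to act as the identity on $\EE_{n+1}$ in the pointed motivic unstable homotopy category and hence as the identity on each $\pi_{*,*}(\EE_{n+1})$. Along this subsequence $\phi(\EE)_{*}$ coincides with the canonical colimit transition, and by cofinality $\phi(\EE)_{*}$ realises the canonical isomorphism in every bidegree. The main obstacle is the bookkeeping in the previous paragraph---confirming that the twist, structure map and $\chi_{n,1}$ action really do assemble on homotopy groups as claimed---and justifying that the restriction to even indices computes the full colimit; once these points are pinned down, the hypothesis absorbs the correction permutation and semistability of $\EE$ follows.
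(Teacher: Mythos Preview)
Your combinatorial core is exactly the paper's idea: the correction permutation $\chi_{n,1}\in\Sigma_{n+1}$ has sign $(-1)^n$, so on a cofinal subsystem it is even and the hypothesis kills it, making $\phi(\EE)$ agree with the canonical transition. The paper packages this as a two-step ladder inside the stabilization functor $Q$, showing that $Q(\phi(\EE))$ is a levelwise weak equivalence; you package it as a cofinality argument on stable homotopy groups. Conceptually these are the same maneuver.

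The genuine gap is your opening reduction. In motivic homotopy theory the bigraded groups
\[
\pi^{s}_{p,q}(\FF)=\colim_{n}\,[S^{p+2n,q+n},\FF_n]
\]
do \emph{not} detect stable weak equivalences of (non-symmetric) motivic spectra; there is no Whitehead theorem with motivic spheres as the only test objects. The correct criterion---and the one the paper uses---is that a map of levelwise fibrant motivic spectra is a stable equivalence iff applying $Q$ yields a levelwise weak equivalence of motivic spaces, equivalently iff it induces isomorphisms on all stable homotopy \emph{presheaves} $U\mapsto\colim_n[S^{p+2n,q+n}\smash U_+,\FF_n]$ for $U\in\Sm_S$. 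Your sentence ``for the closed motivic model structure this suffices'' is simply false as stated.

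The fix is mechanical: run your cofinality argument at the level of the $Q$-construction (as the paper does, comparing the two sequential diagrams of motivic spaces in the pointed unstable homotopy category), or equivalently replace each $\pi_{p+2n,q+n}(-)$ by the presheaf $[S^{p+2n,q+n}\smash U_+,-]$ throughout. Nothing else in your argument changes, since the hypothesis is already formulated as an equality in the unstable homotopy category and therefore propagates through loops, smashes with $U_+$, and filtered colimits.
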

\begin{proof}
We may assume $\EE$ is levelwise fibrant.
Then the standard natural stabilization construction $Q$ gives a stably fibrant replacement of $\EE$.
Recall that in level $n$
\begin{equation}
\label{eq:q}
Q(\EE)_n 
= 
\colim_{k}  
(\EE_n \to \Omega^{2,1}\EE_{n+1} \to \dotsm \to \Omega^{2k,k}\EE_{n+k} \to \dotsm), 
\end{equation}
where the colimit is taken over the structure maps. 
It suffices to show that $Q(\phi(\EE))$ is a levelwise weak equivalence.
The assumption on $\EE$ implies the composite map
\[ 
\Omega^{2k,k}F(S^{2,1}\smash \EE_{n+k}) 
\xrightarrow{\Omega^{2k,k}F(\phi(\EE)_n) } 
\Omega^{2k,k}F(\sh(\EE)_{n+k}) 
\xrightarrow{\mathrm{can}}  
\Omega^{2k+4,k+2}F(S^{2,1}\smash \sh(\EE)_{n+k+1})
\]
coincides with the canonical map 
\[
\Omega^{2k,k}F(S^{2,1}\smash \EE_{n+k}) 
\xrightarrow{\mathrm{can}} 
\Omega^{2k+4,k+2}F(S^{2,1}\smash \EE_{n+k+2})
\] 
in the pointed motivic unstable homotopy category. 
The canonical maps denoted in the above by `$\mathrm{can}$' appear implicitly in (\ref{eq:q}). 
Thus $\phi(\EE)$ induces a weak equivalence on colimits 
\[ 
Q(F(S^{2,1}\smash \EE))_n  
\to 
Q(F(\sh(\EE)))_n 
\]
for every $n$.
\end{proof}

\begin{example}
\label{ex:semistable}
The motivic symmetric spectrum $\Sigma^\infty {\BG}_+$ is semistable.
This follows from Proposition \ref{prop:semistable} because the even permutations are homotopic to the identity map on the motivic spheres. 
For the same reason, the motivic symmetric spectrum $\KGL^{\beta}$ is semistable.
\end{example}

\begin{theorem}
\label{thm:semistable}
Let $\EE$ be a semistable motivic symmetric spectrum, 
and let $U$ denote the right Quillen functor to motivic spectra that forgets the symmetric group actions.
Then the value of the total right derived functor of $U$ at $\EE$ is $U(\EE)$.
\end{theorem}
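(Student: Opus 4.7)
The plan is to produce an explicit stably fibrant replacement $\EE \to \EE^f$ in motivic symmetric spectra for which the induced map $U(\EE) \to U(\EE^f)$ is a stable equivalence of non-symmetric motivic spectra. Since $U(\EE^f)$ represents the total right derived functor $RU(\EE)$, this will identify it with $U(\EE)$ in the motivic stable homotopy category.

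First I would reduce to the case where $\EE$ is levelwise fibrant by applying the lax symmetric monoidal functor $F$ from Lemma \ref{lem:fibrant-repl} levelwise; this preserves both the $\Sigma_n$-actions and the semistability property, so no generality is lost. Next, mimicking Schwede's shift-based construction for ordinary symmetric spectra, I would build a stably fibrant replacement $R(\EE)$ with
\[
R(\EE)_n = \colim_k \Omega^{2k,k}\, \sh^k(\EE)_n = \colim_k \Omega^{2k,k} \EE_{k+n},
\]
where the transition maps are assembled from the shift comparison $\phi(\EE)$ of (\ref{naturalshiftmap}) and the $\Sigma_n$-action is induced by the embedding $\Sigma_n \hookrightarrow \Sigma_{k+n}$ sending $\sigma$ to $\id_k \times \sigma$. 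Standard arguments for motivic symmetric spectra show that $R(\EE)$ is a levelwise $\Omega$-spectrum, hence stably fibrant in the closed motivic model structure, and that the unit $\EE \to R(\EE)$ is a stable equivalence of symmetric spectra.

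The crucial step is to compare $U(R(\EE))$ with the non-symmetric stabilization $Q(U(\EE))$ of (\ref{eq:q}). Both are filtered colimits of the same terms $\Omega^{2k,k}\EE_{k+n}$; they differ only in that the transition maps defining $Q$ use iterated structure maps of $\EE$, whereas those defining $R$ involve the twist-corrected shift maps $\phi(\EE)$. Semistability of $\EE$ says precisely that $\phi(\EE)$ is a stable equivalence of underlying motivic spectra, and an iterated telescoping argument then identifies the two colimits up to stable equivalence. Since $Q(U(\EE))$ is a stably fibrant replacement of $U(\EE)$, we conclude that $U(\EE) \to U(R(\EE))$ is a stable equivalence, which is the content of $RU(\EE) \simeq U(\EE)$. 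The main obstacle is the bookkeeping in this last step: one must track how the cyclic permutations and twist isomorphisms packaged into $\phi(\EE)$ interact with iterated structure maps, so that semistability yields a compatible system of stable equivalences between the two telescopes rather than just an isolated comparison at a single stage.
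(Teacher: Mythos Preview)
Your overall architecture---build the shift-based stabilization $R(\EE)$ and compare it to the non-symmetric $Q$---matches the paper's, but there is a genuine gap in the step you label as routine. The assertion that ``the unit $\EE \to R(\EE)$ is a stable equivalence of symmetric spectra'' by ``standard arguments'' is false for general motivic symmetric spectra: this is exactly the property that distinguishes semistable spectra from the rest. Your $R(\EE)$ is indeed a levelwise $\Omega$-spectrum, but without knowing that $\EE \to R(\EE)$ is a stable equivalence in the symmetric sense you cannot identify $U(R(\EE))$ with $RU(\EE)$, so the rest of the argument has no traction.

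The paper reverses your order of business. It first uses semistability directly to show that each transition map $U(\phi(\EE)^\bigstar)\colon U(\EE) \to \Omega^{2,1}U(\sh(\EE))$ is a stable equivalence of non-symmetric spectra (this is cleaner than your proposed comparison of two telescopes, and avoids the permutation bookkeeping you flag as the main obstacle). Passing to the colimit gives that $U(\EE)\to U(R^\infty\EE)$ is a stable equivalence of underlying spectra. Only then does the paper invoke an external result, \cite[Theorem~18]{RO2}, to upgrade this to a stable equivalence of motivic symmetric spectra, after which $R^\infty\EE$ is recognized as a fibrant replacement. Your proposal omits this upgrade step entirely; it is not standard and cannot be absorbed into ``standard arguments.'' Your argument can be salvaged by reorganizing along these lines, but the citation of a result playing the role of \cite[Theorem~18]{RO2} is essential.
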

\begin{proof}
We may assume $\EE$ is cofibrant and levelwise fibrant.
Let $R^\infty \EE$ denote the colimit of the sequence
\[  
\EE 
\xrightarrow{\phi(\EE)^\bigstar} 
\Omega^{2,1}(\sh(\EE) )
\xrightarrow{\Omega^{2,1}(\sh(\phi(\EE)^\bigstar))} 
\dotsm 
\]
in the category of motivic symmetric spectra.
Here $\phi(\EE)^\bigstar$ is the adjoint of the map $\phi(\EE)$ defined in (\ref{naturalshiftmap}). 
By assumption $U(\phi(\EE))$ is a stable weak equivalence of motivic spectra. 
Since $U$ commutes with the functors $S^{2,1}\smash -$ and $\Omega^{2,1}$ 
forming a Quillen equivalence
on motivic spectra, the derived adjoint
\[ U(\EE) \to  \Omega^{2,1}( U(\sh (\EE)) )\to \Omega^{2,1}((U\sh(\EE))^\mathrm{fib}) \]
is a stable weak equivalence. The stably fibrant replacement functor $Q$
commutes with $\Omega^{2,1}$ due to the finiteness of
$S^{2,1}$. Thus the map 
\[ \Omega^{2,1} (U(\sh (\EE))) \to \Omega^{2,1}Q(U(\sh(\EE))) \]
is a stable weak equivalence.
It follows that $U(\phi(\EE)^\bigstar)$ is also a stable weak equivalence of motivic spectra.

Hence the canonical map $\EE\to R^\infty \EE$ is a stable weak equivalence of underlying motivic spectra. 
By \cite[Theorem 18]{RO2} the same map is also a stable weak equivalence of motivic symmetric spectra. 
Since $R^\infty \EE$ is stably fibrant in the category of motivic symmetric spectra by construction, 
it gives a fibrant replacement of $\EE$. 
The result follows now, 
since
\[ 
U(\EE) 
\to 
U(R^\infty \EE) 
\]
is a stable weak equivalence.
\end{proof}

Define 
\begin{equation*}
a
\colon 
F(\Sigma^\infty {\BG}_+ )
\to 
\Omega^{4,2} \sh(F(\Sigma^\infty {\BG}_+))
\end{equation*}
to be the adjoint of the map
\begin{equation*}
S^{4,2}\smash F(\Sigma^\infty {\BG}_+ )
\to
\sh(F(\Sigma^\infty {\BG}_+ )).
\end{equation*}
In level $n$ the latter is the composite map 
\[ 
S^{4,2}\smash F(\Sigma^\infty {\BG}_+ )_n 
\xrightarrow{b\smash \id} 
F(\Sigma^\infty {\BG}_+ )_1\smash F(\Sigma^\infty {\BG}_+ )_n 
\xrightarrow{\mu_{1,n}} F(\Sigma^\infty {\BG}_+ )_{1+n}. 
\]

\begin{corollary}
\label{cor:semistable}
In the motivic stable homotopy category there exists an isomorphism between the homotopy colimit of the 
diagram of motivic symmetric spectra
\[ 
F(\Sigma^\infty {\BG}_+) 
\xrightarrow{a} 
\Omega^{4,2} \sh(F(\Sigma^\infty {\BG}_+)) 
\xrightarrow{\Omega^{4,2}\sh(a)} 
\dotsm 
\]
and the homotopy colimit $\Sigma^\infty {\BG}_+[\beta^{-1}]$ of the Bott tower
\begin{equation}
\label{equation:botttower}
\Sigma^\infty {\BG}_+ 
\xrightarrow{\beta} 
\Sigma^{-2,-1}\Sigma^\infty {\BG}_+
\xrightarrow{\Sigma^{-2,-1}\beta} 
\dotsm. 
\end{equation}
\end{corollary}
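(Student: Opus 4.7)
The plan is to reduce the comparison to the level of underlying non-symmetric motivic spectra by appealing to Theorem \ref{thm:semistable}, and then to identify the resulting telescope with the Bott tower after recognizing the connecting map as multiplication by $\beta$.

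First I would check that every term in the diagram
\[
F(\Sigma^\infty {\BG}_+) \xrightarrow{a} \Omega^{4,2} \sh(F(\Sigma^\infty {\BG}_+)) \xrightarrow{\Omega^{4,2}\sh(a)} \dotsm
\]
is semistable. Example \ref{ex:semistable} gives this for $\Sigma^\infty{\BG}_+$ itself, and since $F$ is applied levelwise and the functors $\sh$ and $\Omega^{4,2}$ both preserve the sign hypothesis of Proposition \ref{prop:semistable} (the shift adds a fixed point at position $1$, which is an even operation, and $\Omega^{4,2}$ is a homotopical functor that is equivariant on sections), all iterates of $\Omega^{4,2}\sh$ applied to $F(\Sigma^\infty{\BG}_+)$ remain semistable. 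Theorem \ref{thm:semistable} then lets me read off the homotopy colimit at the level of underlying motivic spectra, so it suffices to match the two telescopes there.

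Next I would exploit the stable weak equivalence $\phi(\EE)\colon S^{2,1}\smash \EE\to\sh(\EE)$ available for any semistable $\EE$ to produce a natural stable equivalence
\[
\Omega^{4,2}\sh(F(\Sigma^\infty{\BG}_+)) \simeq \Omega^{4,2}(S^{2,1}\smash F(\Sigma^\infty{\BG}_+)) \simeq \Sigma^{-2,-1}\Sigma^\infty{\BG}_+.
\]
Unwinding the definition of $a$ — whose adjoint uses $b\smash\id$ followed by the monoid multiplication $\mu_{1,n}$ — together with the fact that $b$ represents the Bott class $\beta\in\pi_{2,1}\Sigma^\infty{\BG}_+$, the transported map becomes precisely multiplication by $\beta$, i.e.\ the first transition of the Bott tower (\ref{equation:botttower}). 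Iterating this identification yields a termwise stable equivalence of towers, and passing to homotopy colimits gives the desired isomorphism in the motivic stable homotopy category.

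The hard part will be verifying the compatibility of the chosen stable equivalences with the transition maps: one must check that the square relating $\Omega^{4,2}\sh(a)$, after the identification $\sh\simeq S^{2,1}\smash(-)$, to the Bott-tower map $\Sigma^{-2,-1}\beta$ commutes up to canonical stable homotopy. By naturality of $\phi$, associativity of $\mu_{\bullet,\bullet}$, and the central property of $b$ established in Corollary \ref{cor:bott-central}, this reduces to a finite diagram chase in the pointed motivic unstable homotopy category, with the sign-independent symmetric-group actions inherited from semistability absorbing the remaining permutation bookkeeping.
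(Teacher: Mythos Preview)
Your proposal is correct and follows essentially the same route as the paper: use semistability (via $\phi$) to identify $\Omega^{4,2}\sh(F(\Sigma^\infty{\BG}_+))$ with $\Sigma^{-2,-1}\Sigma^\infty{\BG}_+$, and then observe that $a$ lifts multiplication by $\beta$ by construction. The paper's proof is considerably terser---it does not explicitly invoke Theorem~\ref{thm:semistable} or Corollary~\ref{cor:bott-central}, and it leaves the tower-compatibility check you flag as the ``hard part'' entirely implicit---but the underlying argument is the same.
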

\begin{proof}
Due to semistability of $F(\Sigma^\infty {\BG}_+ )$, 
established in Example~\ref{ex:semistable}, 
we may identify $\Omega^{4,2}\sh(F(\Sigma^\infty {\BG}_+ ))$ with
$\Omega^{4,2}F(S^{2,1}\smash F(\Sigma^\infty {\BG}_+ ))$ and thus with $\Omega^{2,1}F(\Sigma^\infty {\BG}_+ )$ 
up to stable weak equivalence. 
The result follows since $a$ lifts the multiplication by the Bott element map (by construction).
\end{proof}

\begin{theorem}
\label{thm:stable-eq}
The motivic symmetric spectrum $\KGL^{\beta}$ has the homotopy type of the Bott inverted motivic spectrum $\Sigma^\infty {\BG}_+[\beta^{-1}]$.
\end{theorem}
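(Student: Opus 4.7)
The plan is to combine Theorem~\ref{thm:semistable} and Corollary~\ref{cor:semistable}, using the semistability of $\KGL^{\beta}$ to reduce the statement to a comparison of underlying motivic (non-symmetric) spectra.

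First, since $\KGL^{\beta}$ is semistable by Example~\ref{ex:semistable}, Theorem~\ref{thm:semistable} tells me that its stable homotopy type in the motivic stable homotopy category is already computed by the underlying motivic spectrum $U(\KGL^{\beta})$. The same observation applies to the entries $(\Omega^{4,2}\sh)^{k} F(\Sigma^{\infty}\BG_{+})$ of the $a$-tower, since they too are built out of the semistable object $F(\Sigma^{\infty}\BG_{+})$. Consequently it suffices to exhibit a stable weak equivalence between $U(\KGL^{\beta})$ and the underlying homotopy colimit of the $a$-tower, which by Corollary~\ref{cor:semistable} represents $\Sigma^{\infty}\BG_{+}[\beta^{-1}]$.

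The comparison is built from the level-wise on-the-nose identification
\[ ((\Omega^{4,2}\sh)^{n} F(\Sigma^{\infty}\BG_{+}))_{n} = \Omega^{4n,2n} F(\BG_{+}\smash S^{4n,2n}) = \KGL^{\beta}_{n}, \]
which places $\KGL^{\beta}_{n}$ as the $k=n$ entry of the colim diagram at level $n$. This yields, at each level $n$, a canonical map $\KGL^{\beta}_{n} \to \colim_{k}\, ((\Omega^{4,2}\sh)^{k} F(\Sigma^{\infty}\BG_{+}))_{n}$. I would verify that these maps assemble into a morphism of motivic spectra, exploiting that both the structure maps of $\KGL^{\beta}$ and the transition map $a$ of the colim tower are built from the Bott map $b$ and the multiplication $\mu_{-,-}$ on $F(\Sigma^{\infty}\BG_{+})$, with the required commutativities supplied by the centrality of $b$ established in Corollary~\ref{cor:bott-central}. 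For $k\geq n$ the transition map in the colim corresponds, up to the homotopies of Corollary~\ref{cor:bott-central}, to multiplication by the Bott element in an extra $(2,1)$-direction, so a comparison of homotopy groups shows that the induced map is a stable weak equivalence, both sides computing the stable homotopy of $\Sigma^{\infty}\BG_{+}$ after inversion of $\beta$.

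The main obstacle I anticipate is the combinatorial verification that these level-wise identifications actually assemble into a morphism of motivic spectra respecting the structure maps, and in particular the bookkeeping with the diagonal embeddings $\Delta_{n}\colon\Sigma_{n}\hookrightarrow\Sigma_{2n}$ and the cyclic permutations implicit in $\phi(\EE)$. Happily, the reduction to underlying non-symmetric motivic spectra via Theorem~\ref{thm:semistable} removes the symmetric group obstruction from the final comparison, leaving a formal manipulation that rests on the strict associativity of the multiplication on $F(\Sigma^{\infty}\BG_{+})$ and the central property of the Bott map.
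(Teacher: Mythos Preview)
Your proposal is correct and follows essentially the same route as the paper: reduce via semistability (Theorem~\ref{thm:semistable}) to underlying motivic spectra, invoke Corollary~\ref{cor:semistable} to replace $\Sigma^{\infty}\BG_{+}[\beta^{-1}]$ by the $a$-tower, and then identify $U(\KGL^{\beta})$ with the diagonal of that tower via the level-wise equality $((\Omega^{4,2}\sh)^{n}F(\Sigma^{\infty}\BG_{+}))_{n}=\KGL^{\beta}_{n}$. The paper states the diagonal identification more tersely than you do, but the argument is the same.
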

\begin{proof}
Corollary~\ref{cor:semistable} identifies the Bott inverted motivic spectrum $\Sigma^\infty {\BG}_+[\beta^{-1}]$ with the homotopy colimit of the diagram
\begin{equation}
\label{homotopycolimit}
F(\Sigma^\infty {\BG}_+) 
\xrightarrow{a} 
\Omega^{4,2} \sh(F(\Sigma^\infty {\BG}_+)) 
\xrightarrow{\Omega^{4,2}\sh(a)} 
\dotsm. 
\end{equation}
Since the loop and shift functors appearing in (\ref{homotopycolimit}) preserve semistability,
it follows that the terms are semistable.
Next we shall identify the homotopy colimit of (\ref{homotopycolimit}) with $\KGL^{\beta}$.
In effect, 
note that leaving the symmetric groups actions aside, 
$\KGL^{\beta}$ is the diagonal of the diagram of motivic symmetric spectra in (\ref{homotopycolimit}).
Example \ref{ex:semistable} and Theorem \ref{thm:semistable} show that the value of the right derived functor of $U$ at $\KGL^{\beta}$ is given 
by forgetting the group actions on $\KGL^{\beta}$. 
Hence there exists an abstract isomorphism between $\KGL^{\beta}$ and $\Sigma^\infty {\BG}_+[\beta^{-1}]$ in the motivic stable homotopy category.
\end{proof}

Next we discuss in broad strokes a motivic functor model for $K$-theory.
There exists a strict symmetric monoidal functor from motivic symmetric spectra to motivic functors 
\begin{equation*}
\mathbf{MSS}
\to
\mathbf{MF}
\end{equation*}
for the base scheme $S$, which is part of a Quillen equivalence proven in \cite{DROstable}.
Therefore the image $\KGL^{\beta}$ of the motivic symmetric spectrum model for $K$-theory yields a strict 
motivic functor model for $K$-theory.
In order to make this model more explicit, 
one could try to construct it entirely within the framework of motivic functors by starting out with the motivic functor 
${\BG}_+\smash -$ and go through constructions reminiscent of the ones for motivic symmetric spectra in this paper.
\vspace{0.2in}

\begin{remark}
Applying the arguments in this paper to Kan's (lax symmetric monoidal) fibrant replacement functor for simplicial sets and the Bott element in 
$\pi_{2}\Sigma^{\infty}\B\CC^{\times}$ yields a commutative symmetric ring spectrum with the homotopy type of topological unitary $K$-theory.
More generally, 
for $A$ an abelian compact Lie group,
the same argument applies to the Bott inverted model for $A$-equivariant unitary topological ${K}$-theory in \cite{SO2}.
We leave further details to the interested reader.
\end{remark}

\section{Multiplicative structure}
\label{section:multiplicativestructure}
\begin{theorem}
The multiplicative structures on $\KGL^{\beta}$ and $\KGL$ coincide in the motivic stable homotopy category.
\end{theorem}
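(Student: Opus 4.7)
The plan has three stages: identify the underlying objects, pin down the multiplicative structure over the integers via the uniqueness result of \cite{PPR1}, and propagate to an arbitrary base via the absence of phantom maps for $K$-theory. The first stage is already handled by Theorem~\ref{thm:stable-eq}, which gives an isomorphism $\KGL^{\beta}\simeq \KGL$ in the motivic stable homotopy category; so the content of the theorem is that the multiplication on $\KGL^{\beta}$ defined in Section~\ref{section:thestrictmodels} corresponds under this isomorphism to the usual multiplication on $\KGL$ coming from tensor product of virtual vector bundles.

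I would first carry out the argument over $\Spec(\mathbf{Z})$. Here \cite{PPR1} gives a uniqueness statement for commutative ring structures on $\KGL$ in the motivic stable homotopy category, subject to a normalization condition. The main technical task is to verify that $\KGL^{\beta}$ satisfies this normalization. This amounts to tracking the unit $\unit\to\KGL^{\beta}$ and the restriction of the multiplication to $\Sigma^{\infty}{\BG}_{+}\smash \Sigma^{\infty}{\BG}_{+}$, both of which can be read off from the explicit construction in Section~\ref{section:thestrictmodels}: the unit is built from the unit of $F(\Sigma^{\infty}{\BG}_{+})$ together with the Bott map $b$, while the restriction of the multiplication is induced by the group law on $\G$. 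Matching these against the classical normalizations of the unit of $\KGL$ and the multiplication of line bundles is a direct check, after which \cite{PPR1} yields the coincidence of the two multiplications over $\mathbf{Z}$.

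For a general noetherian base $S$ of finite Krull dimension with structure map $f\colon S\to \Spec(\mathbf{Z})$, both multiplications pull back from $\mathbf{Z}$: the one on $\KGL$ tautologically, by the definition of its distinguished structure, and the one on $\KGL^{\beta}$ because each ingredient in its construction (the commutative group scheme $\G$, the Bott map, the fibrant replacement of Lemma~\ref{lem:fibrant-repl}) behaves well under base change. The difference of the two multiplication maps $\KGL\smash \KGL\to \KGL$ therefore restricts to zero on every finite subcomplex visible from $\mathbf{Z}$. Since the smash product can be exhibited as a homotopy colimit of such subcomplexes, the two multiplications agree up to a phantom map. By Landweber exactness of $\KGL$ \cite{NSO}, the group of phantom maps from $\KGL\smash \KGL$ into $\KGL$ vanishes, and the equality of the two ring structures follows over $S$.

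The main obstacle is the interplay of the second and third stages: verifying the normalization hypothesis for $\KGL^{\beta}$ requires unwinding the intertwining of the Bott map with the structure maps through the fibrant replacement, and the base-change step requires care because $\KGL\smash \KGL$ is a large object that is not obviously built from $\mathbf{Z}$-pullbacks by finite constructions. The phantom-map vanishing, though a strong input, is what makes the cellular argument rigorous and is precisely the reason the theorem is not formal.
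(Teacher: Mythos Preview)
Your route is genuinely different from the paper's, and while the broad outline is reasonable, the third stage does not hang together as written.

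\textbf{What the paper does.} The paper never invokes the uniqueness theorem of \cite{PPR1} and never reduces to $\Spec(\Z)$. Instead it works directly over the given base $S$ and produces a commutative diagram of \emph{monoid} maps
\[
\xymatrix{
\KGL^{\beta} \ar[r] & \Sigma^{\infty}{\BG}_{+}[\beta^{-1}] \ar[d] \\
\Sigma^{\infty}{\BG}_{+} \ar[u] \ar[ur] \ar[r] & \KGL }
\]
in the motivic stable homotopy category. The right-hand triangle is first written down on the level of bigraded homology theories, and then the absence of phantom maps into $\KGL$ \cite[Remark~9.8]{NSO} is used to lift it \emph{uniquely} to a commutative diagram of monoids. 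For the left-hand part, the paper exhibits an explicit model $\EE[\beta^{-1}]$ for $\Sigma^{\infty}{\BG}_{+}[\beta^{-1}]$ carrying a ``naive'' product induced from the monoid $\EE_{0}$, and checks directly from the construction of $\KGL^{\beta}$ that the levelwise map $\KGL^{\beta}_n \to \EE_{0}$ respects this product. The composite along the top is then an isomorphism of monoids, which is the claim. Thus the phantom-map input is used to \emph{lift} a homology-level identity, not to compare two a priori different maps.

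\textbf{Where your argument is loose.} Your Stage~3 says both that the multiplication on $\KGL^{\beta}$ pulls back from $\Z$ (because ``each ingredient \dots\ behaves well under base change'') and that the difference of the two multiplications is a phantom map. These two claims do not fit together: if $f^{\ast}\KGL^{\beta}_{\Z}\simeq \KGL^{\beta}_{S}$ as ring objects, then the conclusion over $S$ follows immediately from the $\Z$-case with no phantom argument. Conversely, if you cannot establish that base-change compatibility (and the fibrant replacement $F$ of Lemma~\ref{lem:fibrant-repl} is exactly the place where this is delicate), then the assertion that the difference ``restricts to zero on every finite subcomplex visible from $\Z$'' has no clear meaning and does not by itself force the difference to be phantom. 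You would need an independent argument that the two multiplications agree on \emph{all} compact objects mapping into $\KGL\smash\KGL$, which is essentially the whole problem again. In short, either make the base-change step rigorous (and drop the phantom talk), or find a direct phantom argument that does not presuppose base-change; as written, the two halves of Stage~3 are each leaning on the other.

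\textbf{What each approach buys.} Your strategy, if repaired, would explain the result conceptually via rigidity of the ring structure on $\KGL$, at the cost of checking the \cite{PPR1} normalization for $\KGL^{\beta}$ and carefully controlling base change of $F$. The paper's strategy avoids both of those obligations by staying over $S$ and comparing with the explicit Bott-inverted model, at the cost of building the auxiliary spectrum $\EE[\beta^{-1}]$ and its naive product by hand.
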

\begin{proof}
The proof proceeds by showing there is a commutative diagram of monoids
\begin{equation}
\label{equation:commutativediagrams}
\xymatrix{ 
\KGL^{\beta} \ar[r] & \Sigma^{\infty}{\BG}_{+} [\beta^{-1}]\ar[d]  \\
\Sigma^{\infty} {\BG}_{+} \ar[u] \ar[ur] \ar[r] &  \KGL }
\end{equation}
in the motivic stable homotopy category.
\vspace{0.1in}

On the level of bigraded homology theories there is a commutative diagram:
\begin{equation*}
\xymatrix{ 
& 
(\Sigma^{\infty}{\BG}_{+} [\beta^{-1}])_{\ast,\ast}(\,\,)  \ar[d] \\
(\Sigma^{\infty}{\BG}_{+})_{\ast,\ast}(\,\,)   \ar[ur] \ar[r]  & \KGL_{\ast,\ast}(\,\,)    }
\end{equation*}
This diagram lifts uniquely to a commutative diagram of monoids in the motivic stable homotopy category, 
as asserted by the right hand side of (\ref{equation:commutativediagrams}), 
since for $K$-theory there exist no nontrivial phantom maps according to  \cite[Remark 9.8 (ii), (iv)]{NSO}.
\vspace{0.1in}

On the left hand side of (\ref{equation:commutativediagrams}), 
recall $\Sigma^{\infty} {\BG}_{+}\rightarrow\KGL^{\beta}$ is a map of motivic symmetric ring spectra.
For the discussion of $\KGL^\beta \to \Sigma^{\infty}{\BG}_{+} [\beta^{-1}]$ we shall use the following model for the homotopy colimit.
Let $\EE$ be the stably fibrant replacement of $\Sigma^{\infty}{\BG}_{+} $ obtained by first applying the functor $F$ levelwise and second the stabilization functor $Q$.
Now define $\EE[\beta^{-1}]$ as the diagonal spectrum of the naturally induced sequence
\[ 
\EE 
\to 
\Sigma^{-2,-1} \EE 
\to 
\dotsm 
\]
lifting the Bott tower (\ref{equation:botttower}).
Here $\Sigma^{-2,-1} \EE$ is realized as a shift, 
so that $\EE[\beta^{-1}]_n=\EE_0$ and its structure maps are given by multiplication with the Bott element. 
In level $n$ the map $\KGL^\beta \to\EE[\beta^{-1}]$ is the canonical map 
\[
\Omega^{4n,2n}F({\BG}_+\smash S^{4n,2n}) 
\to 
\Omega^{2\infty,\infty}F({\BG}_+\smash S^{2\infty,\infty}) 
= 
\EE_0. 
\]
When $n=0$ the latter map corresponds via adjointness to the diagonal map in (\ref{equation:commutativediagrams}). 
The evident monoid structure on ${\BG}_+$ induces a monoid structure on $\EE_0$ and hence a naive multiplication on $\EE[\beta^{-1}]$ given by 
\[ 
\EE[\beta^{-1}]_m \smash\EE[\beta^{-1}]_n 
= 
\EE_0\smash\EE_0 
\to 
\EE_0 
= 
\EE[\beta^{-1}]_{m+n}. 
\]
Now from the construction of the ring structure on $\KGL^\beta$ it follows that $\KGL^\beta \to\EE[\beta^{-1}]$ respects  the naive product.
\end{proof}

\vspace{0.1in}

\vspace{0.09in}

\begin{center}
Institut f{\"u}r Mathematik, Universit{\"a}t Osnabr{\"u}ck, Germany. \\
e-mail: oroendig@math.uos.de
\end{center}
\begin{center}
Fakult{\"a}t f{\"u}r Mathematik, Universit{\"a}t Regensburg, Germany.\\
e-mail: Markus.Spitzweck@mathematik.uni-regensburg.de
\end{center}
\begin{center}
Department of Mathematics, University of Oslo, Norway.\\
e-mail: paularne@math.uio.no
\end{center}
\end{document}